\documentclass{amsart} 
\usepackage{amssymb}
\usepackage{amsmath}
\usepackage{amsfonts}

\sloppy

\begin{document}
\newtheorem{theo}{Theorem}[section]
\newtheorem{prop}[theo]{Proposition}
\newtheorem{lemma}[theo]{Lemma}
\newtheorem{exam}[theo]{Example}
\newtheorem{coro}[theo]{Corollary}
\theoremstyle{definition}
\newtheorem{defi}[theo]{Definition}
\newtheorem{rem}[theo]{Remark}


\newcommand{\Bb}{{\bf B}}
\newcommand{\Nb}{{\bf N}}
\newcommand{\Qb}{{\bf Q}}
\newcommand{\Rb}{{\bf R}}
\newcommand{\Zb}{{\bf Z}}
\newcommand{\Ac}{{\mathcal A}}
\newcommand{\Bc}{{\mathcal B}}
\newcommand{\Cc}{{\mathcal C}}
\newcommand{\Dc}{{\mathcal D}}
\newcommand{\Fc}{{\mathcal F}}
\newcommand{\Ic}{{\mathcal I}}
\newcommand{\Jc}{{\mathcal J}}
\newcommand{\Lc}{{\mathcal L}}
\newcommand{\Oc}{{\mathcal O}}
\newcommand{\Pc}{{\mathcal P}}
\newcommand{\Sc}{{\mathcal S}}
\newcommand{\Tc}{{\mathcal T}}
\newcommand{\Uc}{{\mathcal U}}
\newcommand{\Vc}{{\mathcal V}}

\newcommand{\ax}{{\rm ax}}
\newcommand{\Acc}{{\rm Acc}}
\newcommand{\Act}{{\rm Act}}
\newcommand{\ded}{{\rm ded}}
\newcommand{\Gm}{{$\Gamma_0$}}
\newcommand{\ID}{{${\rm ID}_1^i(\Oc)$}}
\newcommand{\PA}{{\rm PA}}
\newcommand{\ACA}{{${\rm ACA}^i$}}
\newcommand{\RefP}{{${\rm Ref}^*({\rm PA}(P))$}}
\newcommand{\RefS}{{${\rm Ref}^*({\rm S}(P))$}}
\newcommand{\Rfn}{{\rm Rfn}}
\newcommand{\tar}{{\rm Tarski}}
\newcommand{\UNFA}{{${\mathcal U}({\rm NFA})$}}

\author{Nik Weaver}

\title [The semantic conception of proof]
       {The semantic conception of proof}

\address {Department of Mathematics\\
          Washington University in Saint Louis\\
          Saint Louis, MO 63130}

\email {nweaver@math.wustl.edu}

\date{\em December 12, 2013}

\maketitle


\section{}

Classically, the notions of truth and provability are both directly
tied to the meanings of the logical constants. The typical setting
involves a model $M$ and a language $L$
which is to be interpreted in $M$. The informational content of $L$
is determined by the way we specify which sentences of $L$ are true.
Fixing an interpretation, the usual way to do this is to begin by
defining a function from the atomic formulas of $L$ into the set
$\{{\rm true},{\rm false}\}$ in some way that reflects the structure
of $M$. We then extend this function to complex formulas recursively
by means of truth conditions (e.g., $A \wedge B$ is true iff $A$ is
true and $B$ is true). This is the classical account of the role of
truth in specifying the meanings of the logical constants.

We also have a notion of deductive reasoning as the means by which we
come to know which sentences are true. Proofs typically proceed though
a series of inferences which conform to certain deduction rules.
These rules can be formulated in terms very similar to the truth conditions,
in a way that also transparently reflects the meanings
of the logical constants (e.g., given $A$ and $B$ one may infer $A \wedge B$,
and given $A \wedge B$ one may infer both $A$ and $B$) \cite{Pra}. It is
easy to see that deductions preserve truth, so that any proof which proceeds
from axioms which are true in the model will establish a conclusion which
is also true in the model. Thus anything which is provable should be true,
but there is no reason to expect the converse to hold in general.

However, this implication (provable implies true) can only be affirmed for
formal proofs relative to some system of axioms which are known to be
true. It is a weakness of the classical picture that we do not have a
general account of how we are to go about choosing the axioms on which
we base our proofs. The seriousness of this problem can already be seen in
the case of first order number theory, where it is generally accepted that
the usual Peano axioms, for instance, do not exhaust our knowledge of what
is true in the model. Indeed, because the true sentences of first order
arithmetic are not recursively enumerable, we will always have the capacity
to strengthen any recursively presented formal system for number theory
that is known to be sound by adding a (standardly expressed arithmetical)
statement of its consistency. This consistency statement cannot have been
formally provable in the original system, but if we can see that that
system is sound then its consistency is certainly provable in an informal
semantic sense. And if we do not recognize that the original system is sound,
then the formal proofs of that system clearly cannot function as genuine
proofs in a semantic sense. So in neither case do the formal proofs of
the given system adequately express the semantic concept of a valid proof.
These considerations suggest that the semantic notion of a valid proof as
a linguistic object which compels rational assent is not so easy to formally
capture.

The subtlety of the semantic notion of a valid proof, in contrast to the
straightforwardness of the syntactic notion of a valid proof within a formal
system, might lead us to focus on the latter at the expense of the former.
But we should not lose sight of the fact that the main reason we care about
formal proofs is because we think they exhibit semantic validity.

\section{}

The classical picture outlined above is unsuited to cases where we
are not able to affirm that every atomic formula of the language has a
well-defined truth value in the model (relative to a given interpretation).
There are various ways this can happen. For instance, the recursive
enumerability issue mentioned above makes it doubtful that the assertion
``$A$ is provable'', in the sense of semantic provability (and restricting
to proofs of finite length), can be said to have a definite truth value for
every sentence $A$ of first order number theory.

Actually, once we are in the business of explicitly reasoning about semantic
provability, there is no need to invoke G\"odelian incompleteness to make
this point. It is easy to see directly how assigning a truth value to a
sentence that asserts the soundness of some deduction rule could be
problematic. For in order to do this we would need to know whether the
rule preserves truth, but that would depend on which sentences are true,
and could potentially hinge on the truth value of the sentence under
consideration. A vicious circle is possible.

Thus, when we are reasoning about provability, we cannot necessarily assume
that every atomic proposition has a definite truth value. This creates
difficulties for the interpretation of the logical constants; we can no
longer characterize them in terms of truth conditions. The constructivist
solution to this sort of problem is to take the deduction rules, rather
than the truth conditions, as our starting point. This is possible because
the deduction rules, despite intuitively having essentially the same content
as the truth conditions, are nonetheless formulated without reference
to truth values. Thus, classically, we use truth
conditions to give meaning to the logical constants, and we justify the
deduction rules by observing that they preserve truth, but constructively,
we discard the truth conditions, regard provability as primary, and use
the deduction rules to give meaning to the logical constants. The rules
for $\wedge$ tell us what $\wedge$ means in the sense that they tell us
under what circumstances we are entitled to assert $A \wedge B$: we
are entitled to assert $A \wedge B$ precisely if we are entitled to
assert both $A$ and $B$. This approach gives us a way to reason about
formulas that might not have definite truth values.

An unfortunate byproduct of this emphasis on provability is that
constructivists are not always careful about distinguishing between a
statement and the assertion that that statement is provable.\footnote{E.g.,
``The assertion of $A \vee \neg A$ is therefore a claim to have, or to be
able to find, a proof or disproof of $A$'' (\cite{Dum}, p.\ 21). More
correct would be ``We are entitled to assert $A \vee \neg A$ if we have, or
are able to find, a proof or disproof of $A$.''} The two are not synonymous.
That we can prove there are infinitely many primes is indeed what licenses
us to assert there are infinitely many primes, and we can arguably regard the
meaning of the assertion that there are infinitely many primes as residing
in our characterization of what constitutes a proof of this assertion, but
saying there are infinitely many primes is not the same as saying
we can prove there are infinitely many primes. One is a statement about
numbers, the other a statement about proofs. For our purposes here,
preserving this distinction is absolutely crucial.

\section{}

We take a proof to be a syntactic object --- not a ``procedure'' or a
``construction'' --- that compels rational acceptance of some conclusion.
We consider the proof relation to be a primitive notion that cannot be
defined in any simpler terms. The most basic feature of a valid proof, in
this conception, is that it must be recognizably valid. That is, if $p$
proves $A$ then we must in principle be able to recognize that $p$ proves
$A$. This is inherent in what we mean by a proof. If we cannot see that
$p$ compels us to accept $A$, then we do not consider $p$ to be a proof
of $A$ in the semantic sense of concern to us here.

It is not important for us whether proofs are expressed linearly or
as trees (or in some other way), just that they be syntactic
objects. We assume only that we have a notion of one proof being contained
within another and that for any two proofs there is a third that contains
both of them. Also, we will allow proofs to contain extraneous material. It
is not clear how a prohibition on inessential content could be formulated,
but even if it could, doing so would be inconvenient for us. Therefore we
stipulate that if $p$ proves $A$ then $p'$ also proves $A$, for any proof
$p'$ containing $p$.

We now want to work out how the meanings of the various logical constants
can be expressed in terms of the proof relation. For instance, we take a
proof of $A \vee B$ to be something
which is either a proof of $A$ or a proof of $B$. Thus $p$ proves $A \vee B$
if and only if $p$ proves $A$ or $p$ proves $B$. Similarly, a proof of
$A \wedge B$ is something which is both a proof of $A$ and a proof of $B$.
(Allowing proofs to contain extraneous material simplifies things here; a
proof of $A \wedge B$ does not have to be a pair of proofs, it can simply
be a proof that contains both a proof of $A$ and a proof of $B$.) So $p$
proves $A \wedge B$ if and only if $p$ proves $A$ and $p$ proves $B$.

A proof of $(\exists x)A(x)$ must describe a construction, prove that
the construction produces a unique object $a$, and prove the statement
$A(a)$. Since it is a name for an object, not the object itself, that
appears in the assertion $A$, in order to give a proof interpretation
of existential quantification we need to require that every individual
in the model be represented by a term in the language. This can always
be assured by simply including a constant symbol in the language for
each individual in the model. (Cf.\ the need to represent abstract
objects by concrete proxies, discussed in Section 3 of \cite{W0}.)

Our accounts of $(\forall x)A(x)$ and $A \to B$ require a more detailed
explanation. In the traditional literature their proof interpretations
ask for a construction which, for each object $a$, produces a proof
of $A(a)$, or a construction which
converts any proof of $A$ into a proof of $B$. This kind of formulation
is implausible on its face. For example, consider a computer program that
inputs natural numbers $a$, $b$, $c$, and $n$, and
then, if $n \geq 3$, prints out a step-by-step evaluation of both
$a^n + b^n$ and $c^n$ and a
verification that they are unequal (say, by comparing them digit by digit).
Since Fermat's last theorem is true, it follows that this procedure does
in fact convert any quadruple $(a,b,c,n)$ such that
$n \geq 3$ into a proof that $a^n + b^n
\neq c^n$. So according to the definition just mentioned,
this trivial procedure would count as a proof of Fermat's last theorem.

We might consider adding a clause to the effect that the procedure must
not only produce a proof of $A(a)$ for each $a$, but also be
recognized to do so. (In fact, it is clear that we have to do this if we
are to sustain our requirement that any proof can be recognized to be a
proof.) The problem here is that recognizing that it is possible to
generate a proof of $A(a)$ is not the same as being compelled to
accept $A(a)$. It is, rather, the same as being compelled to accept that
$A(a)$ is provable. This is just the distinction we emphasized earlier
between asserting some statement and asserting that that statement is
provable. There really is no way to maintain it here because it is essential
to the construction viewpoint that one does not actually have a proof of
$A(a)$ for every $a$, one only has a way of generating these
proofs.

This issue might be confusing because of the condition that any valid
proof must be recognizably valid. It follows from this condition that if
$p$ proves $A$ then it not only compels acceptance of $A$, it must also
compel acceptance of the fact that it proves $A$. This means that any proof
of $(\forall x)A(x)$ would actually also prove that $A(a)$ is provable
for every $a$. But it is the converse implication (going from
``$A(a)$ is provable'' to $A(a)$) that would be needed to
justify the construction interpretation.

The only thing we can do is to define a proof of $(\forall x)A(x)$ to be a
single argument $p$ which, for every object $a$, compels us to accept
the statement $A(a)$. Now, it is natural to object that demanding a
uniform proof of $A(a)$ is too restrictive. The concern is that it
might be possible to, in some uniform way, generate proofs of $A(a)$
for various values of $a$ without being able to give a single
proof that simultaneously covers all cases. But this objection is not
well-taken. Not only does it, as we have just explained, ignore the
distinction between proving $A(a)$ and proving that $A(a)$ is
provable, it also fails on its own terms. For suppose we had a construction
that produced a proof of $A(a)$ for every $a$. How could we know
that it did this? If there are a proper class of possible values of $a$
then direct inspection is not an option. But asking the construction
to be accompanied by a collection of proofs, one for each $a$, that it
produces a proof of $A(a)$ would be absurd; if we allowed that then we
could just as well discard the construction altogether and simply ask for
a collection of proofs of $A(a)$, one for each $a$. On the other
hand, demanding a single uniform proof that the construction produces
a proof of $A(a)$ for every $a$ negates the original basis of the
objection. Any approach at some point has to come down to a single
proof; all the construction viewpoint accomplishes is to erase the distinction
between a statement and the assertion that that statement is provable.

Similar comments apply to implication. Again, to prove $A \to B$ it is not
enough to know that we can use any proof of $A$ to generate a proof of $B$; if
anything, that would establish that $A$ is provable implies $B$ is provable,
not that $A$ implies $B$. The proper formulation is that a proof of $A \to B$
is an argument that, granting $A$, compels rational acceptance of $B$. We
can say that $p$ proves $A \to B$ if and only if every $p' \supseteq p$
that proves $A$ also proves $B$.

Now that we have clarified universal quantification, we can say a little
more about existential quantification. Consider statements of the form
$(\forall x)(\exists y)A(x,y)$. By what we just said, any proof of such a
statement must be a single argument $p$ which, for every object $a$,
proves $(\exists y)A(a,y)$. If $a$ ranges over a proper class of values
then we cannot expect $p$ to explicitly present the corresponding
$b$ in every case. Here we must accept the construction point of
view and ask that $p$ merely tell how to generate $b$. Of course
it must also prove that the generating procedure works, and that
$A(a,b)$ holds.

This completes our informal explication of the logical constants in terms of
provability. Negation is handled by taking $\neg A$ to be an abbreviation
of $A \to \bot$, where $\bot$ represents some canonical falsehood.

We can also reason about the proof relation itself. The idea that proofs
are recognizable implies that if $p$ proves $A$ then the empty proof proves
that $p$ proves $A$. In the reverse direction, it is certainly the
case that whenever we have proven a statement we accept that statement.
However, we cannot take this implication as an axiom because of the
possible circularity involved in its application to proofs in which it
appears. It can only be formulated as a deduction rule: given that $p$
proves $A$, infer $A$. We will elaborate on this point below.

The proof interpretation of the logical constants presented in this
section immediately justifies the usual axioms and rules of minimal
first order predicate calculus. (In particular, the empty proof proves
every tautology.) Thus we are now free to reason accordingly. To recover
intuitionistic logic in some setting we would need to give a special
justification for the ex falso quodlibet law, and to recover classical
logic we would also need to give a special justification for the law
of excluded middle.

\section{}

Write $p\,\vdash A$ for ``$p$ proves $A$''.
To assert that $A$ is provable is to assert that there exists a proof
of $A$. Writing $\Box A$ for ``$A$ is provable'', we therefore have
$\Box A \equiv (\exists p)(p \vdash A)$.
Basic laws relating provability to the logical constants can now
be derived. For instance, a proof of $\Box(A \vee B)$ would
have to construct some $p$ and prove that $p \vdash (A \vee B)$. But
this means it would either prove $p \vdash A$ or $p \vdash B$, and
in either case it would then be a proof of $\Box A \vee \Box B$.
Thus we see that $\Box(A \vee B)$ implies $\Box A \vee \Box B$ in general.
Conversely, any proof of $\Box A \vee \Box B$ is either a proof of
$\Box A$ or $\Box B$, so that it constructs some $p$ and proves either
$p \vdash A$ or $p \vdash B$, and in either case is qualifies as a proof
of $\Box(A \vee B)$. We conclude that $\Box(A \vee B)$ and $\Box A \vee
\Box B$ are equivalent. The equivalence of $\Box(A \wedge B)$ and
$\Box A \wedge \Box B$ can be seen in a similar way, this time using
the property that if $p$ proves $A$ and $q$ proves $B$, then any proof
containing both $p$ and $q$ proves both $A$ and $B$.

A proof of $\Box(\forall x)A(x)$ constructs $p$ and proves $p \vdash A(a)$
for every object $a$. Therefore, for each $a$ it proves
$\Box A(a)$. Thus $\Box(\forall x)A(x) \to (\forall x)\Box A(x)$.
The converse direction is not evident because having a separate proof
of $A(a)$ for each $a$ should not entail that there is a single
proof of $A(a)$ for all $a$. An exception can be made if proofs are
allowed to be as large as the range of values of $a$: for instance, if
$a$ ranges over a finite set and proofs can be any finite size, then
proofs for each $a$ actually could be amalgamated into a single proof
of $(\forall x)A(x)$. Or if the range of values of $a$ is countable and
we allow proofs of countable size, then again such an amalgamation is possible.
A natural choice is to let proofs be as large as any set; then the
law $(\forall x)\Box A(x) \to \Box(\forall x)A(x)$ would hold if $x$
ranges over a set of values, but not if it ranges over a proper class
of values.

A proof of $(\exists x)\Box A(x)$ would construct $a$ and $p$ and prove
$p \vdash A(a)$. So it would prove $p \vdash (\exists x)A(x)$; thus
$(\exists x)\Box A(x) \to \Box(\exists x)A(x)$. Conversely, a proof of
$\Box(\exists x) A(x)$ would construct $p$ and prove
$p \vdash (\exists x)A(x)$.
We would then like to extract an object $a$ such that $p \vdash A(a)$, but
that assumes the soundness of $p$, so we cannot draw this inference.
Again, we will elaborate on this point below.

For implication, a proof of $\Box(A \to B)$ identifies $p$ and proves
$p\vdash(A \to B)$, i.e., it proves that $p'$ proves $A$ implies $p'$ proves
$B$ for any proof $p'$ containing $p$. Putting this together with a proof
of $\Box A$, i.e., a construction of $q$ such that $q \vdash A$,
yields in a uniform way (by adding the instruction to combine $p$ and $q$)
a proof of $\Box B$. So we have shown that $\Box(A \to B)$
implies $\Box A \to \Box B$. The converse is not evident.

We also consider the general relationship between $A$ and $\Box A$. Recall
that if $p$ proves $A$ then $\emptyset$ proves that $p$ proves $A$; this
yields that the empty proof is always a proof of $A \to \Box A$. The converse
is not evident. For suppose we have a proof that $A$ is provable, i.e.,
a construction of $p$ and a proof that $p$ proves $A$. To extract a proof of
$A$ from this premise we would need to rely on the inference from having a
proof of $p \vdash A$ to simply having $p \vdash A$. So we would have to
use the law $\Box A \to A$ which we are trying to prove. There is no obvious
way around this circularity. We can only affirm the deduction rule:
given $\Box A$, infer $A$.

To summarize, we have the axioms
\begin{eqnarray*}
\Box(A \vee B) \qquad&\leftrightarrow&\qquad \Box A \vee\Box B\cr
\Box(A \wedge B) \qquad&\leftrightarrow&\qquad \Box A \wedge \Box B\cr
\Box(\exists x)A \qquad&\leftarrow&\qquad (\exists x)\Box A\cr
\Box(\forall x)A \qquad&\rightarrow&\qquad (\forall x)\Box A\cr
\Box(A \to B) \qquad&\to&\qquad (\Box A \to \Box B)\cr
A \qquad&\to&\qquad \Box A
\end{eqnarray*}
and the deduction rule that infers $A$ from $\Box A$, with the
implication $(\forall x)\Box A \to \Box(\forall x)A$ also holding
if there is a suitable size restriction on the range of values of $x$.

\section{}

The most important conclusion we reached in the last section is that we do
not have a right to affirm the law $\Box A \to A$ in general. This may be
counterintuitive because it seems as though having a proof that there is a
proof of $A$ should be just as good as having a proof of $A$. Once we
have accepted a line of reasoning which establishes that
$p$ proves $A$ we ought to then accept $p$ as a proof of $A$. The problem
is the circularity that arises when we try to affirm this inference as a
general principle. It becomes circular when it is adopted as a universal
law because it then has the effect of affirming the soundness of proofs
in which it might itself have been used.

The situation is analogous to the difficulty associated with formal
systems that prove their own consistency. Once we have accepted a formal
system ${\mathcal S}$ we do generally agree to accept a stronger system
${\mathcal S}'$ obtained by augmenting ${\mathcal S}$ with a (standardly
expressed arithmetical) assertion of the consistency of ${\mathcal S}$.
However, the new consistency axiom is only applied in retrospect, to
affirm the correctness of reasoning which could be executed in the
original system ${\mathcal S}$. We should not accept a new axiom
which expresses the consistency not of the original system, but
of the new system formed by adjoining that axiom itself. That would be
circular, and we know from G\"odel's second incompleteness theorem that
it is not a benign sort of circularity; any such axiom would have to give
rise to an inconsistency. In just the same way, once we accept some formal
system we should agree, whenever that system proves that $A$ is provable,
to accept $A$; but we should not agree to vouch for the original
system augmented by the axiom $\Box A \to A$. Adding the latter axiom would
affirm the soundness not just of all proofs in the original system, but also
of all proofs in the augmented system, including proofs that employ the new
axiom itself.

Maintaining a distinction between $A$ and $\Box A$ is essential for properly
handling the semantic paradoxes involving provability \cite{W1}. Most
importantly,
given that some statement $L$ entails that a contradiction is provable, i.e.,
$L \to \Box \bot$, we cannot infer that $L$ is false, i.e., $L \to \bot$.
Since we do have $\bot \to \Box \bot$ as a special case of the law
$A \to \Box A$, an inference can be drawn in the opposite direction: given
$L \to \bot$ we may deduce $L \to \Box \bot$. In this sense the assertion
$L \to \bot$, which is the standard negation of $L$, is stronger than the
assertion $L \to \Box \bot$. We call $L \to \Box \bot$ the {\it weak
negation} of $L$ and when we have proven this we say that $L$ is
{\it weakly false}.

The semantic paradoxes involving provability collapse when we are careful
to distinguish a statement from the assertion that that statement is
provable. For instance, let $L$ be a sentence which asserts that its
negation is provable. Assuming $L$, we can then immediately
infer $\Box \neg L$. We can also infer $\Box L$ from $L$,
as an instance of the general law $A \to \Box A$. Putting these together
yields $L \to \Box \bot$, i.e., $L$ is weakly false. Taking $\neg L$ as
a premise instead allows us to infer $\Box \neg L$, which is equivalent
to $L$, and this shows that $\neg L$ is false. So the conclusion we reach
is that $L$ is weakly false and $\neg L$ is false. But there is no
contradiction here.

In this example the distinction between falsity and weak falsity is crucial.
If the two were equivalent then we would have proven both $\neg L$ and
$\neg\neg L$, which is absurd. What this means is that it is not merely
the case that we cannot affirm the implication $\Box \bot \to \bot$. Since
this implication would imply that falsity is equivalent to weak falsity, it
would, together with the preceding analysis of $L$, yield a contradiction.
In other words, we have shown $(\Box \bot \to \bot) \to \bot$, which can
also be written as $\neg(\Box \bot \to \bot)$ or as $\neg\neg \Box \bot$.
Thus, in the case where $A$ equals $\bot$, we can affirm that the law
$\Box A \to A$ is false.

This conclusion would be unacceptable if we were to interpret implication
classically. According to the classical interpretation, the only way an
implication can fail is if the premise is true and the conclusion is false;
so the law $\Box \bot \to \bot$ could only fail if $\bot$ were actually
provable. But then, if we were reasoning classically we could apply the
law of excluded middle to $L$ and infer $\Box \bot$ from the fact that
both $L$ and $\neg L$ are weakly false. So in fact we could actually prove
$\Box \bot$.

This only confirms our earlier judgement that it is
inappropriate to reason about provability classically. Indeed, liar type
sentences are just the sort of thing that show us we may not always be in
a position to assign definite truth values to assertions about provability.
So we have to reason constructively, and affirming that $\Box \bot \to \bot$
is false is perfectly reasonable under a constructive interpretation of
implication. It merely expresses the idea that if we could find some way
of converting any (hypothetical) proof that a falsehood is provable into
a proof of a falsehood, then a contradiction would result. In simpler
language, assuming that $\bot$ is unprovable leads to a contradiction. As
we explained above, this assumption is indeed unwarranted because it hinges
on the global soundness of all proofs, and any proof whose soundness is
justified by invoking the global soundness of all proofs is a proof whose
justification is circular.

Here too there is an analogy with the G\"odelian analysis of consistency
in number theoretic systems. We can prove, in Peano arithmetic, that if
PA proves ${\rm Con}($PA$)$ then PA proves $0 = 1$. Since ${\rm Con}($PA$)$
is just the statement that $0 = 1$ is not provable in PA, this amounts to
saying that assuming PA proves that $0 = 1$ is unprovable in PA leads to a
contradiction in PA. This mirrors our conclusion that assuming
$\bot$ is unprovable leads to a contradiction.

\section{}

In the last section we showed how a certain liar type sentence narrowly
avoids paradox. This happens because we lack both the law $\Box A \to A$
(otherwise we could prove $\bot$) and the law of excluded middle (otherwise
we could prove $\Box\bot$, and then infer $\bot$). We now want to prove in
a formal setting that the kind of reasoning exhibited above actually is
consistent, even when dealing with circular phenomena.

We formulate a propositional system ${\mathcal P}$ for reasoning about
assertions which reference each other's provability in a possibly
circular way. The atomic formulas of the language consist of the
falsehood symbol $\bot$ together with finitely many propositional
variables $\phi_1$, $\ldots$, $\phi_n$. Complex formulas are generated
from the atomic formulas by the rule that whenever $A$ and $B$ are
formulas, so are $A \wedge B$, $A \vee B$, $A \to B$, and $\Box A$.

The logical axioms and rules of ${\mathcal P}$ are those of a minimal
propositional calculus, together with the axioms and rule for $\Box$
presented in Section 4 (minus the two axioms for quantification).

The nonlogical axioms of ${\mathcal P}$ will consist of a list of formulas
$\phi_1 \leftrightarrow A_1$, $\ldots$, $\phi_n \leftrightarrow A_n$,
where each $A_i$ can be any formula in which every propositional variable
lies within the scope of some box operator. Thus
$\phi_1 \leftrightarrow \neg \Box \phi_1$ and
$\phi_2 \leftrightarrow \Box \neg\phi_2$ are acceptable axioms but
$\phi_3 \leftrightarrow \neg \phi_3$ is not. The list might include
liar pairs such as $\phi_4 \leftrightarrow \Box \phi_5$ and $\phi_5
\leftrightarrow \neg\Box\phi_4$. The restriction on the $A_i$'s
arises from the grammatical distinction between asserting a proposition
and merely mentioning its name. ``The negation of $\phi_3$'' is not a
cogent assertion; we have to say something like ``$\phi_1$ is not
provable'' or ``the negation of $\phi_2$ is provable''.

It is easy to show that ${\mathcal P}$ minus the rule which infers
$A$ from $\Box A$ is consistent: just make $\Box A$ true for every
formula $A$ and evaluate the truth of the $\phi_i$ and all remaining
formulas classically; then the set of true formulas contains all the
theorems of ${\mathcal P}$ but does not contain $\perp$. Proving that
$\Box^k \bot$ is not a theorem of this system for any $k$ is a little
more substantial.

\begin{theo}
${\mathcal P}$ is consistent.
\end{theo}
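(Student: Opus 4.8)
The plan is to produce a single algebraic model validating every axiom and both inference rules of $\mathcal{P}$ while falsifying $\bot$. For the underlying Heyting algebra take the chain $H=\{0<1<2<\cdots<\infty\}$ (so $a\to b=\infty$ when $a\le b$ and $a\to b=b$ otherwise), interpret $\bot$ as $0$, and interpret $\Box$ by the order‑preserving lattice endomorphism with $\Box a=a+1$ for finite $a$ and $\Box\infty=\infty$. Since $\Box$ preserves finite meets and joins and satisfies $a\le\Box a$, it automatically satisfies $\Box(A\to B)\to(\Box A\to\Box B)$ as well (because $\Box(A\to B)\wedge\Box A=\Box((A\to B)\wedge A)\le\Box B$), so all six $\Box$‑axioms of Section~4 hold under any interpretation of the propositional variables. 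Two features of this $\Box$ are decisive: first, $\Box a=\infty$ forces $a=\infty$, so whenever $\Box A$ takes the value $\top=\infty$ so does $A$, which makes the deduction rule ``from $\Box A$ infer $A$'' validity‑preserving; second, $\Box^{k}\bot$ takes the value $k\ne\infty$ for every $k$. Hence, once the propositional variables are assigned suitable values, every theorem of $\mathcal{P}$ will take the value $\infty=\top$ while $\bot$ takes the value $0$, and consistency follows.

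The real work is to solve the equations $\phi_i\leftrightarrow A_i$, i.e.\ to find $\vec x=(x_1,\dots,x_n)\in H^{n}$ with $x_i=A_i^{H}(\vec x)$, where $A_i^{H}$ denotes evaluation in $H$; equivalently, a fixed point of $\Theta\colon H^{n}\to H^{n}$, $\Theta(\vec x)=(A_1^{H}(\vec x),\dots,A_n^{H}(\vec x))$. This is the step that uses the grammatical restriction on the $A_i$. Metrize $H$ by $\rho(a,b)=2^{-\min(a,b)}$ for $a\ne b$, so that $\rho(a,b)\le 2^{-L}$ exactly when $a$ and $b$ have the same truncation $\min(\,\cdot\,,L)$, and give $H^{n}$ the associated ultrametric, under which it is complete. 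An induction on formula structure shows that $\min(F^{H}(\vec x),L)$ depends only on the truncations $\min(x_j,L)$, and that evaluating one outermost $\Box$ consumes exactly one level of truncation; since in $A_i$ every $\phi_j$ occurs under at least one $\Box$, it follows that $\min(A_i^{H}(\vec x),L+1)$ is already determined by the truncations $\min(x_j,L)$. Thus $\Theta$ contracts distances by the factor $\tfrac12$, and iterating $\Theta$ from $(0,\dots,0)$ converges to the desired $\vec x$.

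With $\vec x$ in hand, every part of $\mathcal{P}$ is valid in the model: the minimal propositional calculus is valid in $H$ (with $\bot$ read as $0$), the $\Box$‑axioms and the deduction rule were handled above, the equations $\phi_i\leftrightarrow A_i$ hold by construction, and modus ponens preserves the value $\top$. Consequently no theorem can evaluate to $0$, so $\mathcal{P}\not\vdash\bot$. The step I expect to require the most care is the truncation/guardedness lemma behind the fixed point — making the bookkeeping ``each $\Box$ buys back one level of truncation'' precise is really the heart of the matter. (A more roundabout route would first trade each application of the deduction rule for a box — if $\mathcal{P}$ proves $A$ using the rule $r$ times then $\mathcal{P}$ without the rule proves $\Box^{r}A$, via the $\Box$‑distribution axioms — thereby reducing the theorem to the unprovability of $\Box^{k}\bot$ in the rule‑free system; but since the model above already validates the rule, this detour is unnecessary.)
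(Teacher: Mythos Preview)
Your argument is correct. The chain $\omega+1$ with $\Box$ acting as successor is a Heyting algebra in which all the $\Box$-axioms hold (since successor preserves finite meets and joins and is inflationary), the rule ``from $\Box A$ infer $A$'' is sound because successor reflects $\top$, and your ultrametric contraction argument correctly isolates the content of the guardedness restriction on the $A_i$: each outermost $\Box$ buys back one level of truncation, so $\Theta$ is a strict contraction and the fixed point exists.

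The paper takes a more bare-hands route. It defines, by induction on a level function in which $\bot$ and every $\Box A$ receive level~$1$, an increasing chain $F_1\subseteq F_2\subseteq\cdots$ of sets of ``rejected'' formulas, and then checks directly that no axiom lies in $F=\bigcup_k F_k$ while the complement of $F$ is closed under modus ponens and the box rule. The two constructions are really the same object viewed from different angles: one can verify that $A\in F_k$ in the paper's sense exactly when $A$ takes a value strictly below $k$ in your model at the fixed point $\vec x$. Your packaging is more conceptual --- Heyting-algebra soundness disposes of the logical axioms in one stroke, and the Banach fixed-point step makes the role of guardedness completely transparent --- and it immediately delivers the intuitionistic strengthening (indeed even $\Box(A\to B)\leftrightarrow(\Box A\to\Box B)$, since successor on a chain preserves $\to$) that the paper remarks on after its proof. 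The paper's version, by contrast, avoids all algebraic and metric apparatus and is entirely elementary; the ``tedious but straightforward'' verifications it defers are exactly the soundness checks that your algebraic framing absorbs.
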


\begin{proof}
We define the level $l(A)$ of a formula $A$ of ${\mathcal P}$ as
follows. The level of $\bot$ and any formula of the form $\Box A$
is 1. The level of $A \wedge B$, $A \vee B$, and $A \to B$ is
$\max(l(A), l(B)) + 1$. The level of $\phi_i$ is $l(A_i) + 1$.

Now we define a sequence of sets of formulas $F_1$, $F_2$, $\ldots$.
These can be thought of as the formulas which we have
determined not to accept as true. The definition of $F_k$ proceeds by
induction on level. When $k = 1$, the formula $\bot$ belongs to $F_1$
but no other formula of level 1 belongs to $F_1$. For $k > 1$, $\bot$
belongs to $F_k$ and $\Box A$ belongs to $F_k$ for every formula $A$
which belongs to $F_{k-1}$. For levels higher than 1, we apply the
following rules. If either $A$ or $B$ belongs to $F_k$ then $A \wedge B$
belongs to $F_k$. If both $A$ and $B$ belong to $F_k$ then $A \vee B$
belongs to $F_k$. If $A_i$ belongs to $F_k$ then $\phi_i$ belongs to
$F_k$. Finally, if there exists $j \leq k$ such that $B$ belongs to $F_j$
but $A$ does not belong to $F_j$, then $A \to B$ belongs to $F_k$.

An easy induction shows that the sequence $(F_k)$ is increasing.
We define $F = \bigcup F_k$. It is obvious that $\bot$
belongs to $F$. The proof is completed by checking that
none of the axioms of ${\mathcal P}$ belongs to $F$, and that the
complement of $F$ is stable under modus ponens and the inference of
$A$ from $\Box A$. This is tedious but straightforward.
\end{proof}

An identical argument would prove the consistency of a
stronger system with the box axiom for implication strengthened to
$\Box(A \to B) \leftrightarrow (\Box A \to \Box B)$ and minimal
logic strengthened to intuitionistic logic. However, the justification
for these stronger axioms is unclear. In particular, there is no
obvious way of defining $\bot$ so as to justify the ex falso law in
this setting. Since $\bot$ could appear in the defining formulas for
the $\phi_i$, a circularity issue arises if we try to build the
implications $\bot \to \phi_i$ into the definition of $\bot$.

\section{}
Taking $\phi \leftrightarrow \neg \phi$ as an axiom guarantees
inconsistency, but as we have just seen, $\phi \leftrightarrow
\neg\Box \phi$ does not. Thus, inserting box operators into the
axioms of an inconsistent theory can sometimes make it consistent.
We now want to describe a general technique for proving results of
the opposite type which state that in some cases inserting box
operators into axioms does not essentially weaken them.

Let ${\mathcal T}_1$ and ${\mathcal T}_2$ be formal systems based
on the same underlying logic (classical, intuitionistic, or minimal), such
that the language of ${\mathcal T}_2$ is the language of ${\mathcal T}_1$
augmented with $\Box$. Also assume that the axioms and deduction rule
for $\Box$ are
included in the theory ${\mathcal T}_2$. Then we say that ${\mathcal T}_2$
{\it weakly interprets} ${\mathcal T}_1$ if, when all appearances of
$\Box$ are deleted from the theorems of ${\mathcal T}_2$, the resulting
set of formulas contains all the theorems of ${\mathcal T}_1$. Informally,
${\mathcal T}_2$ weakly interprets ${\mathcal T}_1$ if ${\mathcal T}_2$
interprets ${\mathcal T}_1$ when we ignore the difference between $A$
and $\Box A$. We have the following trivial result:

\begin{prop}
If ${\mathcal T}_2$ weakly interprets ${\mathcal T}_1$ then the
consistency of ${\mathcal T}_2$ implies the consistency of ${\mathcal T}_1$.
\end{prop}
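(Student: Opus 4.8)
The plan is to prove the contrapositive: assuming $\mathcal{T}_1$ is inconsistent, I will show $\mathcal{T}_2$ is inconsistent. Suppose $\mathcal{T}_1$ proves $\bot$. Since $\mathcal{T}_2$ weakly interprets $\mathcal{T}_1$, deleting all boxes from the theorems of $\mathcal{T}_2$ yields a set of formulas containing every theorem of $\mathcal{T}_1$; in particular this box-free set contains $\bot$. But $\bot$ contains no box operators to delete, so $\bot$ is already the box-deletion of some theorem of $\mathcal{T}_2$.

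The remaining point is to observe that the only formula whose box-deletion equals $\bot$ is $\bot$ itself. This is immediate from the inductive definition of formulas: deleting boxes from a compound formula $A \wedge B$, $A \vee B$, or $A \to B$ produces a compound formula of the same outermost connective, deleting boxes from an atomic formula other than $\bot$ leaves it unchanged, and deleting boxes from $\Box A$ gives the box-deletion of $A$, which by induction is $\bot$ only if $A$'s box-deletion is $\bot$. So any formula mapping to $\bot$ under box-deletion must, peeling off any leading boxes, be $\bot$. Hence $\mathcal{T}_2$ proves $\bot$, so $\mathcal{T}_2$ is inconsistent.

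I do not expect any real obstacle here, which is consistent with the statement being labelled trivial. The only thing to be slightly careful about is the exact reading of "the resulting set of formulas contains all the theorems of $\mathcal{T}_1$" — one must note that $\bot$, being a theorem of an inconsistent $\mathcal{T}_1$, lies in the box-deleted image, and then track back to a genuine $\mathcal{T}_2$-theorem via the elementary fact about box-deletion just described. Alternatively, if one prefers to work directly rather than contrapositively: if $\mathcal{T}_2$ is consistent it does not prove any formula whose box-deletion is $\bot$ (again because such a formula is $\bot$), hence the box-deleted theorem set omits $\bot$, hence it cannot contain all theorems of an inconsistent $\mathcal{T}_1$, so $\mathcal{T}_1$ is consistent.
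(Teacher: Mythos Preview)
Your approach is the same as the paper's (argue the contrapositive and track which $\mathcal{T}_2$-theorems could box-delete to $\bot$), but there is a slip in the execution. You assert that ``the only formula whose box-deletion equals $\bot$ is $\bot$ itself,'' and your alternative direct argument repeats this (``such a formula is $\bot$''). That claim is false: $\Box\bot$, $\Box\Box\bot$, etc.\ all box-delete to $\bot$. Your own inductive argument does not establish the stated claim; in the $\Box A$ case it only shows that the box-deletion of $A$ is $\bot$, not that $\Box A$ equals $\bot$. You in fact notice this, since you then write the correct conclusion: any such formula is $\Box^k\bot$ for some $k\geq 0$.

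The real gap is the next sentence: from ``$\mathcal{T}_2$ proves $\Box^k\bot$'' you jump to ``$\mathcal{T}_2$ proves $\bot$'' with no justification. This step is exactly where the hypothesis that $\mathcal{T}_2$ contains the deduction rule ``from $\Box A$ infer $A$'' is used; the paper's argument applies that rule $k$ times to pass from $\Box^k\bot$ to $\bot$. Without invoking the rule, the inference does not go through (indeed, in a system with the $\Box$-axioms but lacking that rule, proving $\Box\bot$ need not entail proving $\bot$; the paper even remarks on this just before Theorem~6.1). Once you insert that one line, your argument coincides with the paper's.
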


This is just because if ${\mathcal T}_1$ were inconsistent then $\bot$
would be a theorem of ${\mathcal T}_1$, and weak interpretability then
implies that $\Box^k \bot$ must be a theorem of ${\mathcal T}_2$ for some
$k$. Repeated application of the rule ``infer $A$ from $\Box A$'' then
shows that $\bot$ is a theorem of ${\mathcal T}_2$.

We will now describe a procedure for inserting box operators into formulas
that lack them and prove that if the initially given formula is a theorem
of a standard (classical, intuitionistic, or minimal) predicate calculus
then we can ensure that the formula generated by our procedure will be
a theorem of that predicate calculus augmented by the axioms for $\Box$.
This result can be used to establish weak interpretability results; we
illustrate this in the corollary below.

Our procedure can be described as a game between two players, Attacker
and Defender, on a formula $A$. We think of Attacker as seeking to
strengthen the formula and Defender as seeking to weaken it.
The way the game is played is defined inductively on
the complexity of $A$. If $A$ is atomic then the game consists of a
single move in which Defender chooses a value of $k$ (possibly zero)
and prefixes $A$ with $\Box^k$. Attacker does not have a turn.
The game is played on formulas of the form $A \wedge B$ by independently
playing the game on $A$ and $B$ and conjoining the results. It is played
on $A \vee B$ by independently playing on $A$ and $B$ and disjoining the
results. It is played on $(\exists x)A$ and $(\forall x)A$ by playing on
$A$ and prefixing the result with the relevant quantifier. Finally, it
is played on $A \to B$ by first having the players switch roles and play
on $A$, producing a formula $A'$, then revert to their original roles and
play on $B$, producing a formula $B'$. The result of this game is the
formula $A' \to B'$.

If the initially given formula is a theorem of a standard predicate calculus,
then Defender wins provided the formula generated by the game is a theorem
of the same standard predicate calculus augmented by the axioms for $\Box$.

\begin{theo}
Defender has a winning strategy on any theorem of a standard predicate
calculus.
\end{theo}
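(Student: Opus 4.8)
The plan is to translate the game into a statement about boxings of formulas, isolate one monotonicity lemma for the calculus with $\Box$, and then induct on a cut-free derivation of the given theorem.

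First, assign to each atomic occurrence of a box-free formula a sign, positive or negative, according to the parity of the number of implication antecedents it lies within; the role-switching clause in the definition of the game has exactly the effect that in any play Defender fixes the exponent of $\Box$ at each positive occurrence while Attacker fixes it at each negative one, and the outcome of a play is a \emph{boxing} $A^{\ast}$ of $A$, namely the formula got by prefixing each atomic occurrence with the chosen power of $\Box$. Defender wins precisely when $\vdash A^{\ast}$ in the given predicate calculus extended by the axioms for $\Box$ from Section~4 (call this the $\Box$-calculus). The structural fact I will lean on is a monotonicity lemma: if $E$ is box-free and $E_{1}, E_{2}$ are boxings of $E$ whose exponents satisfy $E_{1} \le E_{2}$ at every positive occurrence and $E_{1} \ge E_{2}$ at every negative occurrence, then $\vdash E_{1} \to E_{2}$ in the $\Box$-calculus. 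It is proved by induction on $E$: the atomic case is iteration of $X \to \Box X$; the $\wedge, \vee, \forall, \exists$ cases follow from monotonicity of the corresponding connective in the underlying logic (the signs of subformula occurrences being inherited); the $\to$ case is the familiar contravariant step, applying the induction hypothesis separately to antecedent and consequent. In slogan form: enlarging a positive exponent, or shrinking a negative one, always weakens a boxing.

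Next I would prove the following strengthening, the theorem being the case $\Gamma$ empty, $\Delta = \{A\}$: whenever the sequent $\Gamma \Rightarrow \Delta$ is derivable in the standard (classical, intuitionistic, or minimal) cut-free sequent calculus, Defender can play the games on the members of $\Gamma$ with reversed roles and on the members of $\Delta$ with original roles so that, against every Attacker play, $\vdash \bigwedge \Gamma^{\ast} \to \bigvee \Delta^{\ast}$ holds in the $\Box$-calculus (for minimal and intuitionistic logic $\Delta$ has at most one member; classically one uses multiple-conclusion sequents). Induct on the derivation, cut-elimination being available throughout. For the identity axioms, taken on atoms, the play gives $\Box^{e}p \to \Box^{d}p$ with $e$ from Attacker and $d$ from Defender, and Defender answers $d := e$. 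Weakening and contraction are immediate, and $R\wedge, L\wedge, R\vee, L\vee, R\forall, L\forall, R\exists, L\exists$, and (intuitionistically or classically) $L\bot$ reduce to combining the sub-strategies and invoking the commutation axioms for $\Box$ together with the ordinary rules of the underlying logic; at $L\bot$, for instance, Defender boxes each positive occurrence of the succedent formula to the power Attacker placed on $\bot$ and pushes it inward by the commutation equivalences. The substantive cases are the two implication rules. At $R\!\to$, passing from $\Gamma, B \Rightarrow C, \Delta$ to $\Gamma \Rightarrow (B \to C), \Delta$, the game on $B \to C$ plays $B$ with reversed roles and $C$ with original roles, exactly matching how they are played in the premise, so the premise's strategy transports verbatim and a trivial rearrangement (classically; by currying in the intuitionistic and minimal cases) gives the target. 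At $L\!\to$, passing from $\Gamma \Rightarrow B, \Delta$ and $\Gamma, C \Rightarrow \Delta$ to $\Gamma, (B \to C) \Rightarrow \Delta$, the antecedent $B$ of the left-hand formula is played with original roles and the consequent $C$ with reversed roles, matching the first and second premises respectively; Defender runs both sub-strategies, and in the $\Box$-calculus derives $\bigvee \Delta^{\ast}$ from $\bigwedge \Gamma^{\ast}$, the hypothesis $B^{\ast} \to C^{\ast}$, and the two premise conclusions $\bigwedge \Gamma^{\ast} \to B^{\ast} \vee \bigvee \Delta^{\ast}$ and $\bigwedge \Gamma^{\ast} \wedge C^{\ast} \to \bigvee \Delta^{\ast}$.

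I expect the main obstacle to be the bookkeeping around shared formulas. The two premises of a binary rule share the context $\Gamma$ (and, classically, the succedent $\Delta$), and their sub-strategies may call for incompatible Defender moves inside those shared formulas. The monotonicity lemma is what repairs this: since every member of $\Gamma$ sits in antecedent position in the target implication, Defender may normalize by playing, at her occurrences inside $\Gamma$, as few boxes as possible at positive occurrences and, at negative occurrences, the larger of the two values the sub-strategies request (a finite number once Attacker's play is fixed), and dually large enough on any shared succedent formula; monotonicity then shows that the normalized $\Gamma^{\ast}$ implies each sub-strategy's version of $\Gamma^{\ast}$ and that each sub-strategy's succedent is implied by the normalized one, so the two combinations splice together. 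Verifying this coordination over all the binary rules, checking the monotonicity lemma in the bare implicational fragment of minimal logic, and confirming that the game's information structure indeed lets Defender read her choices off Attacker's relevant moves are the points that need care; the rest is routine manipulation inside the $\Box$-calculus.
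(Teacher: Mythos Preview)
Your proposal is correct and follows essentially the same route as the paper: extend the game to sequents by switching roles on the antecedent side, induct over a Gentzen-style (G1) derivation, and resolve the bookkeeping of shared context formulas across binary rules via a monotonicity lemma saying that increasing Defender's exponents can only help. The paper leaves all of this as a compressed sketch, whereas you spell out the polarity reformulation, the individual rule cases, and the strategy-normalization step more explicitly, but there is no substantive difference in approach.
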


\begin{proof}
All moves in the game consist in one of the players choosing a value of $k$
and prefixing an atomic formula with $\Box^k$ for some $k$. Thus a strategy
for either player is given by specifying, for each of his moves, the
value of $k$ to be played as a function of the values chosen by his
opponent on all of the opponent's earlier moves.

We order strategies by saying that $S \leq S'$ if, at every play, for each
choice of earlier moves by the opponent, the value of $k$ prescribed by
$S'$ is greater than or equal to the value prescribed by $S$. We claim
that if Attacker plays the same moves against two of Defender's strategies,
$S$ and $S'$, such that $S \leq S'$, then the formula generated by the
first game will imply the formula generated by the second game, and if
Defender plays the same moves against two of Attacker's strategies,
$T$ and $T'$, such that $T \leq T'$, then the formula generated by the
first game will be implied by the formula generated by the second game.
This is shown by a straightforward induction on the complexity of the
formula on which the game is played, taking both claims for all simpler
formulas as the induction hypothesis. It follows that any strategy
greater than a winning strategy also wins.

The theorem can be proven using Gentzen-style sequent calculus. We work
with the G1 systems of \cite{TS}, using only atomic formulas in the axioms
Ax and (in the intuitionistic and classical cases) replacing the axiom
L$\bot$ with the axioms $\bot \Rightarrow A$ for all atomic formulas $A$.
We first extend the definition of the game so that it can be played on
sequents. On a sequent of the form $\Gamma, A \Rightarrow B, \Delta$
where $\Gamma$ and $\Delta$ are the context, the game is played by
having Attacker and Defender switch roles and play the game on the
formulas of $\Gamma$ in any order and then on $A$, then revert to their
original roles and play the game on $B$ and then, finally, on the formulas
of $\Delta$ in any order. The proof is completed by checking that
Defender has a winning strategy on any axiom, and if Defender has a
winning strategy on the premises of a rule then he has a winning
strategy on the conclusion of that rule. This is straightforward but
tedious. In every case the strategy adopted by Defender on each
formula in the conclusion of a rule will be the strategy he used
on the same formula in one of the premises of that rule. Since
increasing a winning strategy always produces a winning strategy,
if a formula appears in more than one premise we can assume that
Defender played the same strategy in both cases.
\end{proof}

Say that a formula is {\it increasing} if no implication appears in
the premise of any other implication. Note that since we take
$\neg A$ to be an abbreviation of $A \to \bot$, this also means that
an increasing formula cannot position a negation within the premise
of any implication, nor can it contain the negation of any implication.

\begin{coro}
Suppose the nonlogical axioms of ${\mathcal T}_2$ are increasing and
the nonlogical axioms of ${\mathcal T}_1$ are those of ${\mathcal T}_2$
with all boxes deleted. Then ${\mathcal T}_2$ weakly interprets
${\mathcal T}_1$.
\end{coro}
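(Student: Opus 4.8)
The plan is to show that every theorem of $\Tc_1$ has a boxed preimage that is a theorem of $\Tc_2$, which is precisely weak interpretability. So let $\theta$ be a theorem of $\Tc_1$. Its proof being finite, $\theta$ is derivable in the underlying calculus from finitely many nonlogical axioms of $\Tc_1$, say $\psi_1,\dots,\psi_m$ (repetitions allowed), where $\psi_i$ is the box-deletion of the nonlogical axiom $\chi_i$ of $\Tc_2$; hence the sequent $\psi_1,\dots,\psi_m\Rightarrow\theta$ is derivable in the pure predicate calculus. I would then play the game on this sequent (in the sequent form set up in the proof of the preceding theorem), fix a winning strategy for Defender, and choose Attacker's responses so that the output sequent can be discharged inside $\Tc_2$.

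The mechanism is this. Box-deletion leaves the pattern of implications intact, so each $\psi_i$ is again increasing; thus every atom of $\psi_i$ is either a \emph{skeleton} atom, lying in no implication-premise of $\psi_i$, or a \emph{premise} atom, lying in exactly one such premise --- itself a formula built only with $\wedge,\vee,\forall,\exists$. In the game the role switch attached to the antecedent puts the skeleton atoms of $\psi_i$ under Attacker's control and its premise atoms under Defender's, and the premise of each implication of $\psi_i$ is played before its consequent. So whenever Attacker is to box a skeleton atom $a$ of $\psi_i$, he has already seen the counts Defender assigned to the premise atoms enclosing $a$; he plays on $a$ a value large enough to make the discharge below go through (say, the number of boxes standing over the matching position of $a$ in $\chi_i$ plus the total of those Defender-assigned counts), and he plays $0$ on the atoms of $\theta$ that fall to him. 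Against the winning strategy this produces a derivation, in the calculus with the $\Box$-axioms, of a sequent $\psi_1',\dots,\psi_m'\Rightarrow\theta'$ whose box-deletion is $\psi_1,\dots,\psi_m\Rightarrow\theta$; in particular $\theta'$ deletes to $\theta$, and $\Tc_2$ proves $\psi_1'\to\cdots\to(\psi_m'\to\theta')$.

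The remaining task is $\Tc_2\vdash\psi_i'$ for each $i$, and here the increasing hypothesis is exactly what is needed: since all implications of $\chi_i$ are in consequent position, one derives $\psi_i'$ from $\chi_i$ in $\Tc_2$ by pushing boxes through those implications one at a time using $A\to\Box A$ and $\Box(A\to B)\to(\Box A\to\Box B)$ until every premise atom carries at least the count Defender gave it; distributing those boxes down onto atoms via $\Box(A\wedge B)\leftrightarrow\Box A\wedge\Box B$, $\Box(A\vee B)\leftrightarrow\Box A\vee\Box B$, $\Box(\exists x)A\leftrightarrow(\exists x)\Box A$, and $\Box(\forall x)A\leftrightarrow(\forall x)\Box A$; trimming the surplus off the premise atoms, legitimate because those are negative positions (again $A\to\Box A$); and padding the skeleton atoms up to Attacker's counts, legitimate because those are positive positions. (The rule $\Box B\vdash B$ of $\Tc_2$ is also freely available, $\chi_i$ being a genuine theorem.) Given $\Tc_2\vdash\psi_i'$ for all $i$ and $\Tc_2\vdash(\psi_1'\to\cdots\to(\psi_m'\to\theta'))$, $m$ applications of modus ponens give $\Tc_2\vdash\theta'$, which finishes the proof.

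I expect the real work to be the box-bookkeeping in the third paragraph, together with checking that the play order on each $\psi_i$ really does let Attacker react to Defender. The point not to misjudge is that it is Defender, not Attacker, who controls the premise atoms of $\psi_i$, so $\psi_i'$ need not be $\chi_i$; what saves the argument is that the increasing condition keeps all implications of $\psi_i$ in consequent position, so the boxes Defender puts on premise atoms can always be matched downstream by Attacker and then absorbed with the $\Box$-distribution axioms. The other points --- that box-deletion inverts the game's insertions, and that the discharge never leaves $\Tc_2$ --- are routine.
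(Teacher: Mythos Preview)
Your argument is correct and follows the same plan as the paper: invoke Defender's winning strategy on (the sequent form of) the implication from the axioms to the theorem, and use the increasing hypothesis to give Attacker a strategy on the antecedent formulas ensuring each resulting $\psi_i'$ is derivable from the corresponding $\Tc_2$-axiom $\chi_i$. Where the paper establishes $\chi_i \to \psi_i'$ by induction on the complexity of the increasing formula together with a lemma on implication-free subformulas (yielding the bound $j$ with $C' \to \Box^j C$), you instead specify Attacker's counts explicitly as ``boxes in $\chi_i$ plus the sum of Defender's enclosing counts'' and verify the implication by direct box-pushing; these are two packagings of the same computation.
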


\begin{proof}
Let $B$ be any theorem of ${\mathcal T}_1$. We must find a way to
insert boxes into $B$ so that it becomes a theorem of ${\mathcal T}_2$.
Since ${\mathcal T}_1$ proves $B$, it is a logical consequence of finitely
many nonlogical axioms $A_i$ of ${\mathcal T}_1$; writing their conjunction
as $A = \bigwedge A_i$ we then have that $A \to B$ is a theorem
of a standard predicate calculus. The problem is
to insert boxes into $A$ and $B$, yielding new formulas $A'$ and
$B'$, in such a way that $A' \to B'$ is a theorem of standard predicate
calculus augmented by the axioms for $\Box$, and such that
each conjunct $A_i'$ of $A'$ is implied by the corresponding
nonlogical axiom $A_i''$ of ${\mathcal T}_2$. It will then follow
that $B'$ is a theorem of ${\mathcal T}_2$, as desired.

We achieve this result by playing the game described above on the formula
$A \to B$. We know that Defender can ensure that $A' \to B'$ is a
theorem of standard predicate calculus augmented by the axioms for
$\Box$, so all we need to do is to
prescribe a strategy for Attacker which ensures that each conjunct
$A_i'$ of $A'$ is implied by the axiom $A_i''$.

The game is played on $A \to B$ by first switching the players' roles
and playing on $A$. So we reduce to a problem about finding a strategy for
Defender when the game is played on $A$, and this amounts to giving a
strategy for Defender on each conjunct $A_i$. We know that the formula
$A_i''$ is obtained from $A_i$ by inserting boxes in some way, and we
need to provide Defender with a strategy for playing the game on $A_i$
in a way that ensures $A_i'' \to A_i'$. We prove this can be achieved
for any increasing formula $A_i$ recursively on its complexity. The only
interesting case is when $A_i$ is an implication, $A_i = A_{i0} \to A_{i1}$.
We require a lemma which states that if a formula $C$ contains no implications
and $C'$ is obtained from $C$ by inserting boxes in some way, then
$C \to C' \to \Box^j C$ (i.e., $C \to C'$ and $C' \to \Box^j C$) for some
value of $j$. This is easily shown by induction on the complexity of $C$.
This lemma can be applied to $A_{i0}$ because $A_i$ is increasing, so no
matter how the game is played on $A_{i0}$ we will have
$A_{i0}' \to \Box^j A_{i0} \to \Box^jA_{i0}''$ for some $j$. Inductively
Defender can then employ a strategy on $A_{i1}$ which ensures
that $\Box^j A_{i1}'' \to A_{i1}'$, and this yields that
$A_{i0}'' \to A_{i1}''$ implies
$$A_{i0}' \to \Box^j A_{i0}'' \to \Box^j A_{i1}'' \to A_{i1}',$$
that is, $A_i''$ implies $A_i'$, as desired.
\end{proof}

\bigskip
\bigskip

\end{document}